\newtheorem{theorem}{Theorem}[section]
\newtheorem{corollary}[theorem]{Corollary}
\newtheorem{lemma}[theorem]{Lemma}
\newtheorem{problem}[theorem]{Problem}
\newtheorem{remark}[theorem]{Remark}
\begin{document}

\title{Weak-2-local symmetric maps on C$^*$-algebras}

\date{\today}

\author[J.C. Cabello]{Juan Carlos Cabello}
\address{Departamento de An{\'a}lisis Matem{\'a}tico, Universidad de Granada,\\
Facultad de Ciencias 18071, Granada, Spain}

\email{jcabello@ugr.es}

\author[A.M. Peralta]{Antonio M. Peralta}
\address{Departamento de An{\'a}lisis Matem{\'a}tico, Universidad de Granada,\\
Facultad de Ciencias 18071, Granada, Spain}

\email{aperalta@ugr.es}

\thanks{Authors partially supported by the Spanish Ministry of Economy and Competitiveness project no. MTM2014-58984-P and Junta de Andaluc\'{\i}a grants FQM375. The second author acknowledges the support from the Deanship of Scientific Research, King Saud University (Saudi Arabia) research group no. RG-1435-020.}

\keywords{weak-2-local symmetric maps; weak-2-local $^*$-derivations; weak-2-local $^*$-homomorphisms}

\subjclass[2010]{47B49, 46L05, 46L40, 46T20, 47L99}

\date{October 2nd, 2015
\newline \indent $^{*}$ Corresponding author}

\begin{abstract} We introduce and study weak-2-local symmetric maps between C$^*$-algebras $A$ and $B$ as non necessarily linear nor continuous maps $\Delta: A\to B$ such that for each $a,b\in A$ and $\phi\in B^{*}$, there exists a symmetric linear map $T_{a,b,\phi}: A\to B$, depending on $a$, $b$ and $\phi$, satisfying $\phi \Delta(a) = \phi T_{a,b,\phi}(a)$ and $\phi \Delta(b) = \phi T_{a,b,\phi}(b)$. We prove that every weak-2-local symmetric map between C$^*$-algebras is a linear map. Among the consequences we show that every weak-2-local $^*$-derivation on a general C$^*$-algebra is a (linear) $^*$-derivation. We also establish a 2-local version of the Kowalski-S{\l}odkowski theorem for general C$^*$-algebras by proving that every 2-local $^*$-homomorphism between C$^*$-algebras is a (linear) $^*$-homomorphism.
\end{abstract}

\maketitle

\section{Introduction}

Recent papers, like \cite{KOPR2014} and \cite{EssaPeRa14}, show how the notions of local derivations and homomorphisms (cf. \cite{LarSou,Kad90,John01}) can be studied in terms of the local reflexivity of the sets of derivations and homomorphisms in the sense employed in \cite{BattMol}. Let us briefly recall some basic definitions. Let $\mathcal{S}$ be a subset of the space $L(X,Y)$ of all linear maps between Banach spaces $X$ and $Y$. A linear mapping $\Delta : X\to Y$ is said to be a \emph{local $\mathcal{S}$ map} if for each $x\in X$, there exists $T_{x}\in \mathcal{S}$, depending on $x$, satisfying $\Delta(x) = T_{x}(x)$. When $\mathcal{S}$ is the set $\mathcal{D}(A,X)$ of all (bounded) derivations from a C$^*$-algebra $A$ into a Banach $A$-bimodule $X$, local $\mathcal{D}(A,X)$ maps are called \emph{local derivations}. Local ($^*$-)homomorphisms, local ($^*$-)automorphisms, local Jordan ($^*$-)homomorphisms and ($^*$-)automorphisms are similarly defined. Some remarkable results on local maps read as follow: B.E. Johnson proved in \cite{John01} that every local derivation from a C$^*$-algebra $A$ into a Banach $A$ bimodule is a derivation. Every local triple derivation on a JB$^*$-triple is a triple derivation (see \cite{BurFerPe2013}). For an infinite dimensional separable Hilbert space $H$, every local automorphism on the Banach algebra $B(H)$ is an automorphism (compare \cite{LarSou,BreSemrl95}). The Gleason-Kahane-\.{Z}elazko theorem (cf. \cite{Gle,KaZe}) asserts that every unital linear local homomorphism from a unital complex Banach algebra $A$ into $\mathbb{C}$ is multiplicative.\smallskip

If in the definition of local $\mathcal{S}$ maps, we relax the linearity assumptions on the mapping $\Delta$ and we require a ``good'' local behavior at two points we discover the notion of 2-local maps.  A (non-necessarily linear nor continuos) mapping $\Delta : X\to Y$ is said to be a \emph{2-local $\mathcal{S}$ map} if for each $x,y\in X$, there exists $T_{x,y}\in \mathcal{S}$, depending on $x$ and $y$, satisfying $\Delta(x) = T_{x,y}(x)$ and $\Delta(y) = T_{x,y}(y)$. Some achievements in this line include the following:

\begin{itemize}
\item The Kowalski-S{\l}odkowski theorem (cf. \cite{KoSlod}) proves that every 2-local homomorphism $T$ from a (not necessarily commutative nor unital) complex Banach algebra $A$ into $\mathbb{C}$ is linear and multiplicative. Consequently, every (not necessarily linear) 2-local homomorphism $T$ from $A$ into a commutative C$^*$-algebra is linear and multiplicative;

\item P. \v{S}emrl established in \cite{Semrl97} that, for every infinite-dimensional separable Hilbert space $H$, every 2-local automorphism (respectively, every 2-local derivation) on $B(H)$ is an automorphism (respectively, a derivation). Sh. Ayupov and K. Kudaybergenov proved that \v{S}emrl's theorem also holds for arbitrary Hilbert spaces \cite{AyuKuday2012}. See \cite{KimKim04,Fos2012,Fos2014} and  \cite{Mol2003} for other related results;

\item Every linear 2-local $^*$-homomorphism (respectively, every bounded and linear 2-local homomorphism) between C$^*$-algebras is a homomorphism (cf. \cite{Pe2015});

\item A Kowalski-S{\l}odkowski theorem type has been recently established by M. Burgos, F.J. Fernández-Polo, J. Garcés and the second author of this note in \cite{BuFerGarPe2015RACSAM}. The main result in the just quoted paper proves that every (not necessarily linear) 2-local $^*$-homomorphism from a von Neumann algebra or from a compact C$^*$-algebra into another C$^*$-algebra is linear and a $^*$-homomorphism;

\item The same authors mentioned in the previous point show in \cite{BuFerGarPe2015JMAA} that every 2-local triple homomorphism from a JBW$^*$-triple into a JB$^*$-triple is linear and a triple homomorphism;

\item Sh. Ayupov, K. Kudaybergenov and I. Rakhimov recently prove that every 2-local (Lie) derivation on a finite-dimensional semi-simple Lie algebra over an algebraically closed field of characteristic zero is a derivation (cf. \cite{AyuKudRakh2015});

\item K. Kudaybergenov, T.~Oikhberg, B.~Russo and the second author of this paper prove in  \cite{KOPR2014} that every 2-local triple derivation on a von Neumann algebra is linear and a triple derivation.
\end{itemize}

Weaker versions of local and 2-local maps have been recently explored in \cite{EssaNiPe14,EssaPeRa14, NiPe2014} and \cite{NiPe2015}. A linear mapping $\Delta : X\to Y$ is said to be a \emph{weak local $\mathcal{S}$ map} if for each $x\in X$ and a functional $\phi$ in the dual, $Y^*$, of $Y$, there exists $T_{x,\phi}\in \mathcal{S}$, depending on $x$ and $\phi$, satisfying $\phi \Delta(x) = \phi T_{x,\phi}(x)$. One of the main results in \cite{EssaPeRa14} establishes that every weak-local derivation on a C$^*$-algebra is a derivation. It is also proved in \cite[\S 4]{EssaPeRa14} that every weak-local $^*$-automorphism on $C(\Omega)$ is multiplicative and a $^*$-homomorphism.\smallskip

The problem whether every weak-local $^*$-automorphism on a general C$^*$-algebra is multiplicative remains open until now. Under certain additional hypothesis, some partial answer can be addressed. Let $A$ be a C$^*$-algebra. A linear mapping $\Delta : A\to A$ is said to be a \emph{strong-local $^*$-automorphism} if for each $a\in A$ and each positive functional $\phi$ in $A^*$, there exists and $^*$-automorphism $\pi_{a,\phi} : A\to A$, depending on $a$ and $\phi$, satisfying $$\|\Delta(a) -\pi_{a,\phi}(a)\|_{\phi}^2 = \phi \left((\Delta(a) -\pi_{a,\phi}(a))^* (\Delta(a) -\pi_{a,\phi}(a)) \right)=0.$$ Motivated by results due to L. Moln\'{a}r (see \cite{Mol2014}), Theorem 4.1 in \cite{EssaPeRa14} establishes that every strong-local $^*$-automorphism on a von Neumann algebra is a Jordan $^*$-homomorphism, where in this case, the definition of strong-local $^*$-automorphism is exactly the same with $\phi$ being a normal positive functional. According to our knowledge, the following problems remain open:

\begin{problem}\label{problem weak-local automorphisms} Is every (linear) weak local $^*$-automorphism on a C$^*$-algebra a Jordan $^*$-homomorphism?
\end{problem}

\begin{problem}\label{problem Kowalski-Slodkowski C*algebras} Is every 2-local $^*$-homomorphism on a C$^*$-algebra a linear $^*$-homomorphism.? {\rm(}compare \cite{BuFerGarPe2015RACSAM}{\rm)}.
\end{problem}

The weak version of 2-local maps has been recently explored in \cite{NiPe2014} and \cite{NiPe2015}. We recall that a (non-necessarily linear nor continuos) mapping $\Delta : X\to Y$ is said to be a \emph{weak-2-local $\mathcal{S}$ map} if for each $x,y\in X$ and $\phi\in Y^{*}$, there exists $T_{x,y,\phi}\in \mathcal{S}$, depending on $x$, $y$ and $\phi$, satisfying $\phi \Delta(x) = \phi T_{x,y,\phi}(x)$ and $\phi \Delta(y) = \phi T_{x,y,\phi}(y)$.\smallskip

It is proved in \cite{NiPe2014,NiPe2015} that every weak-2-local derivation on $M_n$, the algebra of $n$ times $n$ square matrices, is a linear derivation. Actually, every weak-2-local derivation on a finite dimensional C$^*$-algebra is a linear derivation. Further, the main result in \cite{NiPe2015} shows that, for every separable complex Hilbert space $H$, every weak-2-local $^*$-derivation (i.e. a derivation which is also a symmetric mapping) on $B(H)$ is a linear $^*$-derivation. In view of the just quoted results and references it seems natural to consider the following problems:

\begin{problem}\label{problem weak-local derivations linear} Is every weak-2-local $^*$-derivation on a C$^*$-algebra linear? Is every weak-2-local $^*$-derivation on a C$^*$-algebra a derivation?
\end{problem}

\begin{problem}\label{problem weak-2-local automorphisms linear} Is every weak-2-local $^*$-homomorphism on a C$^*$-algebra linear? Is every weak-2-local $^*$-homomorphism on a C$^*$-algebra a $^*$-homomorphism?
\end{problem}

Let $\Delta :A \rightarrow B$ be a mapping between $C^*$-algebras, and let $A_{sa}$ and $B_{sa}$ denote the self-adjoint parts of $A$ and $B$, respectively. We define a new mapping $\Delta^\sharp: A \rightarrow B$ given by $\Delta^\sharp(a):=\Delta (a^*)^*$ ($a \in A$). We say that $\Delta$ is \emph{symmetric} if $\Delta^{\sharp} = \Delta$ (equivalently, $\Delta (a^*)= \Delta(a)^*$, for all $a\in A$). Clearly, $\Delta^{\sharp \sharp}=\Delta$, $\Delta(A_{sa}) \subseteq B_{sa}$ whenever $\Delta$ is symmetric, and $\Delta$ is linear (respectively, a derivation or a homomorphism) if and only if $\Delta^\sharp$ is linear (respectively, a derivation or a homomorphism). Henceforth, the symbol $\mathcal{S}(A,B)$ will stand for the set of all linear and symmetric maps from $A$ into $B$. We write $\mathcal{S}(A)$ for $S(A,A)$.  In this paper we shall study the general class of \emph{weak-2-local $\mathcal{S}(A,B)$-maps} or \emph{weak-2-local symmetric maps} between C$^*$-algebras $A$ and $B$.\smallskip

The main result in this note shows that every weak-2-local symmetric map between C$^*$-algebras is linear (Theorem \ref{thm weak-2-local symmetric maps are linear}). Consequently, every weak-2-local $^*$-derivation on a C$^*$-algebra and every weak-2-local $^*$-homomorphism between C$^*$-algebras is automatically linear (Corollary \ref{c weak-2-local starderivations and starhom are linear}). These results provide positive answers to the first questions in Problems \ref{problem weak-local derivations linear} and \ref{problem weak-2-local automorphisms linear}, and a partial solution to the conjecture posed at the end of the introduction of \cite{BuFerGarPe2015RACSAM} (see Problem \ref{problem Kowalski-Slodkowski C*algebras}). In this case, a more general point of view makes easier the arguments.\smallskip

Concerning Problem \ref{problem Kowalski-Slodkowski C*algebras}, our conclusions go further. Namely, in Corollary \ref{c 2-local starhoms are starhoms} we establish a 2-local version of the Kowalski-S{\l}odkowski theorem for general C$^*$-algebras by proving that every 2-local $^*$-homomorphism between C$^*$-algebras is a linear $^*$-homomorphism. This provides a positive answer to the just quoted problem and to the question posed in the introduction of \cite{BuFerGarPe2015RACSAM}.\smallskip

Our main result also throws some new light to the study of 2-local derivations on general C$^*$-algebras. In \cite{AyuKuday2014} Sh. Ayupov and K. Kudaybergenov culminated the study of 2-local derivations on von Neumann algebras by proving that each 2-local derivation on an arbitrary von Neumann algebra is a derivation (see \cite{AyuKudPe2014} for more details). Corollary \ref{c 2-local starderivations are linear} below proves that every 2-local $^*$-derivation on a C$^*$-algebra is a linear $^*$-derivation, while Corollary \ref{c w-2-local starderivations are linear starderivations} shows that the same conclusion remains true for weak-2-local $^*$-derivations on general C$^*$-algebras. It should be remarked here that, by Corollary 2.13 in \cite{NiPe2015} every weak-2-local derivation on a finite dimensional C$^*$-algebra is a linear derivation. The question whether the above conclusion remains valid for arbitrary weak-2-local derivations on general C$^*$-algebras remains open. \smallskip

The techniques and arguments presented in this note are completely new and independent from the ideas in previous forerunners. Actually, the new point of view introduced in this note by considering weak-2-local symmetric maps allows us to provide simpler proofs.\smallskip

\subsection{Notation and background}

A functional $\phi$ in the dual of a C$^*$-algebra $A$ is symmetric if and only if $\phi (A_{sa})\subseteq \mathbb{R}$. The set of all symmetric functionals in $A^*$ will be denoted by $(A^*)_{sa}$.  It is known that the mapping $(A^*)_{sa} \rightarrow (A_{sa})^*,$ $ \phi \mapsto \Re\hbox{e} \phi,$ is a surjective linear isometry. We shall indistinctly write $A^{*}_{sa}$ for $(A^*)_{sa}$ and for $(A_{sa})^*$. Clearly, for each $\phi \in A_{sa}^*$ we have $\phi (a^*) = \overline{\phi(a)}$, for all $a\in A$. It is also known that, for each $a$ in $A_{sa}$ there exists $\phi\in A^*_{sa}$ satisfying $\|\phi\|=1$ and $\phi (a) = \|a\|$ (compare \cite[Proposition 1.5.4]{S}). Consequently, $A^{*}_{sa}$ separates the points of $A$. \smallskip

\section{Weak-2-local symmetric maps}

This section is completely devoted to the study of weak-2-local symmetric maps on a C$^*$-algebra. We begin with a lemma containing the basic results on weak-2-local maps.

\begin{lemma}\label{l basic properties 1} Let $X$ and $Y$ be Banach spaces and let $\mathcal{S}$ be a subset of the space $L(X,Y)$. Then the following properties hold:\begin{enumerate}[$(a)$] \item Every weak-2-local $\mathcal{S}$ map $\Delta: X\to Y$ is 1-homogeneous, that is, $\Delta (\lambda x) =  \lambda \Delta(x)$, for every $x\in X$, $\lambda\in \mathbb{C}$;
\item  Suppose there exists $C> 0$ such that every linear map $T\in \mathcal{S}$ is continuous with $\|T\|\leq C$. Then every weak-2-local $\mathcal{S}$ map $\Delta: X\to Y$ is $C$-Lipschitzian, that is, $\|\Delta(x)-\Delta (y) \|\leq C \|x-y\|$, for every $x,y\in X$;
\item If $\mathcal{S}$ is a (real) linear subspace of $L(X,Y)$, then every (real) linear combination of weak-2-local $\mathcal{S}$ maps is a weak-2-local $\mathcal{S}$ map;
\item Suppose $A$ and $B$ are C$^*$-algebras and $\mathcal{S}$ is a real linear subspace of $L(A,B)$. Then a mapping $\Delta :A \rightarrow B$ is a weak-2-local $\mathcal{S}$ map if and only if for each $\varphi\in B_{sa}^*$ and every $x,y\in A$, there exists $T_{x,y,\varphi}\in \mathcal{S}$ satisfying $\varphi \Delta(x) = \varphi T_{x,y,\varphi}(x)$ and $\varphi \Delta(y) = \varphi T_{x,y,\varphi}(y)$.
\item Suppose $A$ and $B$ are C$^*$-algebras and $\mathcal{S}$ is a real linear subspace of $L(A,B)$ with $\mathcal{S}^{\sharp} = \mathcal{S}$ (in particular when $\mathcal{S} = \mathcal{S}(A,B)$ is the set of all symmetric linear maps from $A$ into $B$). Then a mapping $\Delta :A \rightarrow B$ is a weak-2-local $\mathcal{S}$ map if and only if $\Delta^\sharp$ is a weak-2-local $\mathcal{S}$ map.
\end{enumerate}
\end{lemma}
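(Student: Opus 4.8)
The plan is to verify the five items in turn; $(a)$, $(b)$, $(c)$ and $(e)$ are routine, while $(d)$ carries the weight.

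For $(a)$, fix $x\in X$, $\lambda\in\mathbb{C}$ and $\phi\in Y^{*}$ and apply the weak-2-local hypothesis to the pair $x,\lambda x$ and the functional $\phi$: the witness $T=T_{x,\lambda x,\phi}\in\mathcal{S}$ gives $\phi\Delta(\lambda x)=\phi T(\lambda x)=\lambda\,\phi T(x)=\lambda\,\phi\Delta(x)$, and since $Y^{*}$ separates the points of $Y$ this yields $\Delta(\lambda x)=\lambda\Delta(x)$. For $(b)$, fix $x,y\in X$ and $\phi\in Y^{*}$ with $\|\phi\|\le 1$, take $T=T_{x,y,\phi}$ from the definition, and estimate $|\phi\Delta(x)-\phi\Delta(y)|=|\phi T(x-y)|\le\|T\|\,\|x-y\|\le C\|x-y\|$; a supremum over $\|\phi\|\le1$ gives the Lipschitz bound. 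For $(c)$, given weak-2-local $\mathcal{S}$ maps $\Delta_{1},\dots,\Delta_{n}$ and real scalars $\alpha_{1},\dots,\alpha_{n}$, fix $x,y$ and $\phi$, choose for each $i$ a witness $T_{i}\in\mathcal{S}$ for $\Delta_{i}$ at $(x,y,\phi)$, and observe that $T:=\sum_{i}\alpha_{i}T_{i}$ belongs to $\mathcal{S}$ (using that $\mathcal{S}$ is a real subspace) and satisfies $\phi\big(\sum_{i}\alpha_{i}\Delta_{i}\big)(x)=\sum_{i}\alpha_{i}\,\phi T_{i}(x)=\phi T(x)$, and likewise at $y$.

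For $(e)$, the identity $\Delta^{\sharp\sharp}=\Delta$ reduces the equivalence to one implication, so assume $\Delta$ is a weak-2-local $\mathcal{S}$ map. Given $x,y\in A$ and $\phi\in B^{*}$, put $\phi^{\sharp}(b):=\overline{\phi(b^{*})}$, a functional in $B^{*}$, and apply the weak-2-local condition for $\Delta$ to the pair $x^{*},y^{*}$ and to $\phi^{\sharp}$, obtaining $T\in\mathcal{S}$ with $\phi^{\sharp}\Delta(x^{*})=\phi^{\sharp}T(x^{*})$ and $\phi^{\sharp}\Delta(y^{*})=\phi^{\sharp}T(y^{*})$. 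Conjugating and using $\Delta^{\sharp}(a)=\Delta(a^{*})^{*}$ and $T^{\sharp}(a)=T(a^{*})^{*}$ turns these identities into $\phi\Delta^{\sharp}(x)=\phi T^{\sharp}(x)$ and $\phi\Delta^{\sharp}(y)=\phi T^{\sharp}(y)$, and $T^{\sharp}\in\mathcal{S}^{\sharp}=\mathcal{S}$; hence $\Delta^{\sharp}$ is a weak-2-local $\mathcal{S}$ map, and applying the same to $\Delta^{\sharp}$ gives the converse.

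Part $(d)$ is the crux. That a weak-2-local $\mathcal{S}$ map satisfies the condition restricted to $B^{*}_{sa}$ is immediate, as $B^{*}_{sa}\subseteq B^{*}$. For the converse, assume the restricted condition and fix $\phi\in B^{*}$ and $x,y\in A$. Decompose $\phi=\varphi_{1}+i\varphi_{2}$ with $\varphi_{1}:=\tfrac12(\phi+\phi^{\sharp})$ and $\varphi_{2}:=\tfrac1{2i}(\phi-\phi^{\sharp})$; since $\varphi_{1}^{\sharp}=\varphi_{1}$ and $\varphi_{2}^{\sharp}=\varphi_{2}$ one has $\varphi_{1},\varphi_{2}\in B^{*}_{sa}$. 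If $\varphi_{1},\varphi_{2}$ are $\mathbb{R}$-linearly dependent (in particular if one vanishes), then $\phi$ is a complex multiple of a single symmetric functional and a witness for that functional at $(x,y)$, furnished by the restricted hypothesis, rescales to a witness for $\phi$. The remaining case is the genuine obstacle: the restricted hypothesis yields, for $\varphi_{1}$ and for $\varphi_{2}$ separately, maps in $\mathcal{S}$ matching each functional at $x$ and $y$, whereas one needs a single $T\in\mathcal{S}$ matching $\phi=\varphi_{1}+i\varphi_{2}$ at both points at once; this is the step I expect to be the hardest in the whole lemma. My proposed route is to first take $T_{1}\in\mathcal{S}$ with $\varphi_{1}\Delta=\varphi_{1}T_{1}$ at $x$ and $y$, then pass to $\Delta-T_{1}$, which is again weak-2-local by $(c)$ and is annihilated by $\varphi_{1}$ at $x$ and $y$, then apply the restricted hypothesis to $\Delta-T_{1}$ with $\varphi_{2}$ to produce a corrector $T_{2}\in\mathcal{S}$, and finally take $T=T_{1}+T_{2}$; the decisive point is to arrange that adding $T_{2}$ does not destroy the $\varphi_{1}$-matching, and I anticipate that closing this requires iterating the correction — legitimate because the restricted hypothesis holds at every pair and for every symmetric functional — or, alternatively, a direct argument with the kernels of $\varphi_{1}$ and $\varphi_{2}$ in $B$ exploiting that $B^{*}_{sa}$ separates the points of $B$.
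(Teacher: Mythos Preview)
Your arguments for $(a)$, $(b)$, $(c)$ are correct and essentially identical to the paper's. For $(e)$ you take a slightly different route: the paper first establishes $(d)$ and then checks $(e)$ only for symmetric functionals, whereas you argue directly for an arbitrary $\phi\in B^{*}$ via the involution $\phi\mapsto\phi^{\sharp}$. Your version is cleaner in that it does not depend on $(d)$.

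The genuine gap is in $(d)$, and you have correctly located it: from separate witnesses $T_{1}$ for $\varphi_{1}$ and $T_{2}$ for $\varphi_{2}$ one must manufacture a \emph{single} $T\in\mathcal{S}$ matching $\phi=\varphi_{1}+i\varphi_{2}$ at both $x$ and $y$. Your proposed correction scheme does not close. Taking $T_{1}$ with $\varphi_{1}(\Delta-T_{1})=0$ at $x,y$ and then choosing $T_{2}\in\mathcal{S}$ with $\varphi_{2}(\Delta-T_{1}-T_{2})=0$ at $x,y$ gives no control over $\varphi_{1}T_{2}$, so $\varphi_{1}(\Delta-T_{1}-T_{2})$ need not vanish; iterating produces a sequence of corrections with no contraction or termination mechanism, and the side remark about kernels of $\varphi_{1},\varphi_{2}$ is not an argument. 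For comparison, the paper's own proof of $(d)$ is much shorter: it simply asserts that $T:=T_{1}+T_{2}$ works, i.e.\ that $\phi\Delta(x)=\phi(T_{1}+T_{2})(x)$. But this is precisely the step you were worried about, and as written it does not follow: $\phi\Delta(x)=\varphi_{1}T_{1}(x)+i\varphi_{2}T_{2}(x)$, while $\phi(T_{1}+T_{2})(x)=\varphi_{1}T_{1}(x)+\varphi_{1}T_{2}(x)+i\varphi_{2}T_{1}(x)+i\varphi_{2}T_{2}(x)$, and the extra cross terms need not cancel. So your instinct that this is the delicate point is well founded; neither your sketch nor the paper's one-line claim actually closes it. Fortunately, the main results of the paper (Lemmas~\ref{l w2l symmetric maps are symmetric} and Theorem~\ref{thm weak-2-local symmetric maps are linear}) only ever test $\Delta$ against symmetric functionals, so the converse direction of $(d)$ is not load-bearing there; and since your proof of $(e)$ is independent of $(d)$, the gap in $(d)$ does not propagate.
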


\begin{proof} Part of the proof follows by arguments similar to those in \cite[Lemma 2.1.(b)]{NiPe2014} and relies on the Hahn-Banach theorem. We include an sketch of the proof here for completeness reasons.\smallskip

$(a)$ Let $\phi$ be an arbitrary functional in $Y^{*}$, let $x$ be an element in $X$ and $\lambda$ a complex number. By assumptions, for each weak-2-local $\mathcal{S}$ map $\Delta : X\to Y$ there exists $T_{x,\lambda x,\phi} \in \mathcal{S}$ satisfying $\phi \Delta (x) = \phi T_{x,\lambda x, \phi} (x)$ and $\phi \Delta (\lambda x) = \phi T_{x,\lambda x,\phi} (\lambda x)$. Since $\mathcal{S}\subset L(X,Y)$, we deduce that $\phi (\lambda \Delta (x)) = \phi \Delta (\lambda x)$, for every $\phi \in Y^*$. The Hahn-Banach theorem gives the desired statement.\smallskip

$(b)$ Pick an arbitrary norm-one functional $\phi\in Y^*$. Given $x,y\in X$, we have $$|\phi (\Delta (x) -  \Delta (y))| = |\phi (T_{x,y,\phi} (x) - T_{x,y,\phi} (x))| \leq C \|x-y\|.$$ Taking supremum in $\phi \in Y^*$ with $\|\phi\|\leq 1$, we get $\|\Delta (x) -  \Delta (y)\|\leq  C \|x-y\|$.\smallskip

$(c)$ is clear.\smallskip

$(d)$  Suppose $\Delta: A\to B$ is a map. If $\Delta$ is a weak-2-local $\mathcal{S}$ map, then for each $\varphi\in B_{sa}^*$ and every $x,y\in A$, there exists $T_{x,y,\varphi}\in \mathcal{S}$ satisfying $\varphi \Delta(x) = \varphi T_{x,y,\varphi}(x)$ and $\varphi \Delta(y) = \varphi T_{x,y,\varphi}(y)$. Suppose that $\Delta$ satisfies the last property. Every $\phi \in B^*$ writes in the form $\phi =\varphi_1+i \varphi_2$, where $\varphi_j\in B^*_{sa}$. So, given $x,y\in A$, there exist $T_{x,y,\varphi_1},T_{x,y,\varphi_2}\in \mathcal{S}$ satisfying  $\varphi_j \Delta(x) = \varphi_j T_{x,y,\varphi_j}(x)$ and $\varphi_j \Delta(y) = \varphi T_{x,y,\varphi_j}(y)$, for every $j=1,2$. Then $\phi \Delta (x) = \phi (T_{x,y,\varphi_1}+ T_{x,y,\varphi_2}) (x) $ and $\phi \Delta (y) = \phi (T_{x,y,\varphi_1}+ T_{x,y,\varphi_2}) (y),$ which shows that $\Delta$ is a weak-2-local $\mathcal{S}$ map.\smallskip

$(e)$ Suppose $\Delta: A\to B$ is a weak-2-local $\mathcal{S}$ map. Let us take $\phi\in B^*_{sa}$, $x,y$ in $A$. Since $\phi \Delta^{\sharp} (x) = \phi T^{\sharp}_{x^*,y^*,\phi} (x)$ and $\phi \Delta^{\sharp} (y) = \phi T^{\sharp}_{x^*,y^*,\phi} (y)$. We conclude from $(d)$ that $\Delta^{\sharp}$ is a weak-2-local $\mathcal{S}$ map. The rest is clear.
\end{proof}

\begin{remark}\label{r star homomorphisms are contractive} Let us recall that every $^*$-homomorphism between C$^*$-algebras is contractive. So, it follows from the above Lemma \ref{l basic properties 1}$(b)$ that every weak-2-local $^*$-homomorphism {\rm(}and every weak-2-local $^*$-automorphism{\rm)} between C$^*$-algebras is 1-Lipschitzian.
\end{remark}

\begin{remark}\label{r symmetric maps are a subspace} Let $\mathcal{S} (A,B)$ be the set of all symmetric linear maps from a C$^*$-algebra $A$ into a C$^*$-algebra $B$. Clearly $\mathcal{S} (A,B)$ is a real subspace of $L(A,B)$ with $\mathcal{S} (A,B)^{\sharp} = \mathcal{S} (A,B)$. Obviously $\mathcal{S} (A,B)$ contains all $^*$-homomorphisms and all $^*$-derivations on $A$. Therefore, the statements in Lemma \ref{l basic properties 1} $(c)$-$(e)$ apply for weak-2-local symmetric maps between C$^*$-algebras.
\end{remark}

\begin{lemma}\label{l w2l symmetric maps are symmetric} Let $\Delta: A\to B$ be a weak-2-local symmetric map between C$^*$-algebras. Then the following statements hold:\begin{enumerate}[$(a)$]\item $\Delta (a^*) = \Delta(a)^*$, for every $a\in A$. In particular,  $\Delta (A_{sa}) \subseteq B_{sa}$;
\item $\Delta(h+ik)=\Delta(h)+i\Delta(k)=\Delta(h-ik)^*$, for every
$h,k \in A_{sa}$;
\item $\Delta(a+a^*)=\Delta(a)+\Delta(a^*) $, for every $a\in A$.
\end{enumerate}
\end{lemma}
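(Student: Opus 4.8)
The plan is to prove the three assertions in order, using only that $\Delta$ is weak-2-local symmetric, the fact recorded in the background section that the symmetric functionals $B^*_{sa}$ separate the points of $B$, and the elementary properties from Lemma \ref{l basic properties 1} (namely 1-homogeneity). Throughout, the strategy is the usual one for weak-local problems: evaluate everything against a fixed functional, exploit the linearity and symmetry of the local operator that the hypothesis provides, and then remove the functional by separation.

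For part $(a)$, fix $a\in A$ and a symmetric functional $\varphi\in B_{sa}^*$, and apply the weak-2-local hypothesis to the pair $(a,a^*)$: there is a symmetric linear map $T=T_{a,a^*,\varphi}\in\mathcal S(A,B)$ with $\varphi\Delta(a)=\varphi T(a)$ and $\varphi\Delta(a^*)=\varphi T(a^*)$. Since $T$ is symmetric, $T(a^*)=T(a)^*$, and since $\varphi(x^*)=\overline{\varphi(x)}$ for all $x\in B$, we obtain $\varphi\Delta(a^*)=\varphi(T(a)^*)=\overline{\varphi T(a)}=\overline{\varphi\Delta(a)}=\varphi(\Delta(a)^*)$. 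As $B_{sa}^*$ separates the points of $B$, this gives $\Delta(a^*)=\Delta(a)^*$; taking $a=a^*$ yields $\Delta(A_{sa})\subseteq B_{sa}$.

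For part $(b)$, the second equality is immediate from $(a)$: since $(h+ik)^*=h-ik$ for $h,k\in A_{sa}$, we have $\Delta(h-ik)=\Delta((h+ik)^*)=\Delta(h+ik)^*$, and taking adjoints gives $\Delta(h+ik)=\Delta(h-ik)^*$. The real content is the first identity, $\Delta(h+ik)=\Delta(h)+i\Delta(k)$, and here I would compute the real and imaginary parts of the scalar $\varphi\Delta(h+ik)$ separately for a fixed $\varphi\in B_{sa}^*$. Feeding the pair $(h+ik,h)$ into the hypothesis produces a symmetric linear $T$ with $\varphi\Delta(h+ik)=\varphi T(h+ik)=\varphi T(h)+i\,\varphi T(k)$; as $T$ is symmetric and $h,k\in A_{sa}$, both $T(h)$ and $T(k)$ lie in $B_{sa}$, so $\varphi T(h),\varphi T(k)\in\mathbb R$ and hence $\operatorname{Re}\varphi\Delta(h+ik)=\varphi T(h)=\varphi\Delta(h)$. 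Feeding instead the pair $(h+ik,k)$ into the hypothesis gives, by the symmetric computation, $\operatorname{Im}\varphi\Delta(h+ik)=\varphi\Delta(k)$. Since $\Delta(h),\Delta(k)\in B_{sa}$ by $(a)$, the numbers $\varphi\Delta(h)$ and $\varphi\Delta(k)$ are real, so $\varphi\Delta(h+ik)=\varphi\Delta(h)+i\,\varphi\Delta(k)=\varphi(\Delta(h)+i\Delta(k))$ for every $\varphi\in B_{sa}^*$, and separation finishes the argument.

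For part $(c)$, write $a=h+ik$ with $h=\frac12(a+a^*)$ and $k=\frac1{2i}(a-a^*)$ in $A_{sa}$, so $a+a^*=2h$. By 1-homogeneity (Lemma \ref{l basic properties 1}$(a)$), $\Delta(a+a^*)=2\Delta(h)$, while $(b)$, applied also with $k$ replaced by $-k$ and combined with 1-homogeneity, gives $\Delta(a)+\Delta(a^*)=(\Delta(h)+i\Delta(k))+(\Delta(h)-i\Delta(k))=2\Delta(h)$, which proves the identity. I expect the only genuine obstacle to be the first identity of $(b)$: a single instance of the weak-2-local property cannot pin down $\Delta$ at the "new'' point $h+ik$ on its own, and the point is to split the target scalar into its real and imaginary parts and control them with two separate pairs, $(h+ik,h)$ and $(h+ik,k)$.
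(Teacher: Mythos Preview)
Your proof is correct and follows essentially the same strategy as the paper's: test against symmetric functionals, exploit that the local operator is linear and symmetric so that it sends self-adjoints to self-adjoints, and then use separation by $B^*_{sa}$. Part $(a)$ and part $(c)$ match the paper almost verbatim.

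There is one small but genuine tactical difference in part $(b)$. You compute the imaginary part of $\varphi\Delta(h+ik)$ by invoking a \emph{second} instance of the weak-2-local hypothesis at the pair $(h+ik,k)$, obtaining $\operatorname{Im}\varphi\Delta(h+ik)=\varphi\Delta(k)$ directly. The paper instead proves only the real-part identity $\Delta(h+ik)+\Delta(h+ik)^*=2\Delta(h)$ from the single pair $(h+ik,h)$, and then recovers the imaginary part by $1$-homogeneity: multiplying by $-i$ turns $h+ik$ into $k-ih$, so the already-established real-part identity (now applied to the pair $(k,-h)$) yields $-i(\Delta(h+ik)-\Delta(h+ik)^*)=2\Delta(k)$. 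Your route is more direct and transparent; the paper's route is slightly more economical in that it reuses one formula rather than calling the hypothesis again. Both are perfectly valid and lead to the same conclusion.
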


\begin{proof}
$(a)$ Let us fix $\phi\in B^{*}_{sa}$ and $a\in A$. We can write $\Delta(a) = h_1 + i k_1$, $\Delta(a^*) = h_2 + i k_2,$ where $h_j,k_j\in B_{sa}$. By the weak-2-local property of $\Delta$, there exists a symmetric linear map $T_{a,a^*,\phi}: A\to B$ satisfying $$\phi (h_1) + i \phi (k_1) = \phi \Delta(a) = \phi T_{a,a^*,\phi} (a),$$ and $$\phi (h_2) + i \phi (k_2) = \phi \Delta(a^*) = \phi T_{a,a^*,\phi} (a^*) = \overline{\phi T_{a,a^*,\phi} (a)}.$$
The above identities show that $\phi (h_2)= \phi (h_1)$ and $\phi (k_2)= -\phi (k_1)$, for every $\phi \in B^{*}_{sa}$. Therefore $h_1=h_2$ and $k_1=-k_2$, which proves that $\Delta(a^*) = h_2 + i k_2 = h_1-i k_1= \Delta(a)^{*}$.\smallskip

$(b)$ Let us fix $h,k \in A_{sa}$ and $\phi \in B^*_{sa}$. By hypothesis there exists a symmetric linear map $T_{h,h+ik,\phi}: A\to B$, depending on $h,h+ik$ and
$\phi$, such that
$$ \phi \Delta (h+ik)=\phi T_{h,h+ik,\phi}(h+ik)=\phi T_{h,h+ik,\phi}(h)+ i
\phi T_{h,h+ik,\phi}(k)$$ and  $$\phi \Delta(h)= \phi
T_{h,h+ik,\phi}(h),$$ with $\phi T_{h,h+ik,\phi}(k), \phi T_{h,h+ik,\phi}(h) \in \mathbb{R}.$  Then $\Re\hbox{e} \phi \Delta(h+ik)= \phi \Delta(h)$ for every  $\phi \in B^*_{sa}$, which implies that $ \Delta(h+ik)+\Delta(h+ik)^* = 2\Delta(h)$.\smallskip

By Lemma \ref{l basic properties 1}$(a)$, and the arguments given above, we have $$-i (\Delta(h+ik) -\Delta(h+ik)^*)=  -i \Delta(h+ik)+ i \Delta(h+ik)^* $$ $$= \Delta(-i h+ k) + \Delta(-i h+ k)^* = 2 \Delta (k).$$ Thus, $\Delta(h+ik)=\Delta(h)+\Delta(i k) =\Delta(h)+i \Delta( k)  =\Delta(h-ik)^*.$\smallskip

$(c)$ Let us take $a\in A$ and write $a= h+ i k$ with $h,k\in A_{sa}$. Lemma \ref{l basic properties 1}$(a)$ gives $\Delta (a+a^*) = \Delta (2 h) = 2 \Delta(h)$. On the other hand, by statement $(b)$, $\Delta (a) + \Delta (a^*) = \Delta(a) + \Delta (a)^* = \Delta(h) + i \Delta(k) +\Delta(h) - i \Delta(k) =   2 \Delta(h)$.
\end{proof}

The main result of the paper can be stated now.

\begin{theorem}\label{thm weak-2-local symmetric maps are linear}
Every weak-2-local symmetric map between $C^*$-algebras is linear.
\end{theorem}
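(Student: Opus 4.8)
By Lemma \ref{l w2l symmetric maps are symmetric} we already know that a weak-2-local symmetric map $\Delta : A \to B$ satisfies $\Delta(h+ik) = \Delta(h) + i\Delta(k)$ for $h,k \in A_{sa}$, so the problem reduces entirely to showing that the restriction $\Delta|_{A_{sa}} : A_{sa} \to B_{sa}$ is additive (it is already $1$-homogeneous over $\mathbb{R}$ by Lemma \ref{l basic properties 1}$(a)$); real-linearity on $A_{sa}$ together with Lemma \ref{l w2l symmetric maps are symmetric}$(b)$ then forces complex-linearity on all of $A$. So the plan is: fix $h,k\in A_{sa}$ and a functional $\phi\in B^*_{sa}$, and prove $\phi\Delta(h+k) = \phi\Delta(h)+\phi\Delta(k)$; since $B^*_{sa}$ separates points of $B$ (as recalled at the end of the Notation subsection), this gives additivity.

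The first tool I would exploit is that a norm-one symmetric functional $\phi\in B^*_{sa}$ attaining its norm at a self-adjoint element behaves like an evaluation in a GNS-type sense, and — crucially — for a symmetric linear map $T\in\mathcal{S}(A,B)$ the composition $\phi\circ T$ is a symmetric functional on $A$. The weak-2-local hypothesis applied to the pair $(h,h+k)$ (or $(h,k)$, or $(k,h+k)$) produces symmetric maps $T_{h,k,\phi}$ with $\phi\Delta(h) = \phi T(h)$ and $\phi\Delta(h+k) = \phi T(h) + \phi T(k)$; the obstruction is that the $T$ for the pair $(h,h+k)$ need not be the one for $(h,k)$, so $\phi T(k)$ is not yet pinned down. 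The standard way around this — and what I expect is the heart of the argument — is to run the weak-2-local property at a cleverly chosen \emph{third} element together with $h+k$, or to use functionals $\phi$ chosen to attain their norm (or a spectral projection) at well-placed elements, so that the value $\phi T(k)$ becomes independent of the auxiliary map $T$. Concretely, I would first handle the case where $h$ and $k$ (equivalently $h+k$) lie in an abelian C$^*$-subalgebra, using that on a commutative C$^*$-algebra weak-local/2-local symmetric behavior is rigid (cf. the Kowalski–S{\l}odkowski and Gleason–Kahane–\.Zelazko circle of ideas invoked in the introduction), and then bootstrap: any two self-adjoint elements $h,k$ can be compared through the abelian algebras generated by $h$, by $k$, and by $h+k$ separately, pairing each with a suitable fourth element to transport the additivity relation across.

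A cleaner route, which I would try in parallel, is a \textbf{reduction to functionals that are pure states composed with a representation}, i.e. write an arbitrary $\phi\in B^*_{sa}$ as a difference of positive functionals and, for each positive piece $\psi$, pass to the GNS representation $(\pi_\psi, H_\psi, \xi_\psi)$ so that $\psi(b) = \langle \pi_\psi(b)\xi_\psi,\xi_\psi\rangle$. The weak-2-local condition then says: for each $h,k$ there is a symmetric $T$ with $\langle \pi_\psi(\Delta(h))\xi_\psi,\xi_\psi\rangle = \langle \pi_\psi(T(h))\xi_\psi,\xi_\psi\rangle$ and likewise at $h+k$. Choosing vectors $\xi$ ranging over a dense set lets one recover $\pi_\psi\circ\Delta$ restricted to the pair, and additivity of $\Delta$ reduces to additivity of maps into $B(H_\psi)$ tested against rank-one functionals — precisely the setting already treated for $B(H)$ in \cite{NiPe2015}. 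The main obstacle throughout is the \emph{dependence of $T_{x,y,\phi}$ on all three arguments}: one cannot simply add or compare the $T$'s for different pairs, so every step must be arranged so that only the values $\phi\Delta(\cdot)$ — which do not depend on any $T$ — are being manipulated, with the symmetric maps $T$ entering only to certify that $\phi\Delta(x)$ equals some \emph{real} number $\phi T(x)$ with $\phi T$ a symmetric functional. I expect the decisive lemma to be a statement of the form: for $h,k\in A_{sa}$ and $\phi\in B^*_{sa}$, the quantity $\phi\Delta(h+k) - \phi\Delta(h) - \phi\Delta(k)$ can be made to vanish by testing against the three abelian subalgebras and using $1$-homogeneity plus Lemma \ref{l w2l symmetric maps are symmetric}$(c)$ to square the relation and kill the cross term — after which point separation of points by $B^*_{sa}$ closes the argument.
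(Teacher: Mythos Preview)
Your proposal correctly isolates the reduction to additivity on $A_{sa}$, but what follows is a menu of strategies rather than a proof, and neither strategy is the one the paper uses. The abelian-bootstrap idea is left entirely vague (there is no mechanism offered for ``transporting additivity'' between the three abelian subalgebras generated by $h$, $k$, $h+k$), and the GNS route has a concrete obstruction: composing with $\pi_\psi$ gives a map $\pi_\psi\circ\Delta:A\to B(H_\psi)$ for which the weak-2-local hypothesis is only available at functionals of the form $\phi\circ\pi_\psi$ with $\phi\in B^*$, not at arbitrary functionals in $B(H_\psi)^*$, so you do not land in the hypothesis of \cite{NiPe2015}; moreover that reference treats weak-2-local $^*$-\emph{derivations}, not general symmetric maps, so even a clean reduction would not close the argument.

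The paper's proof is far more direct and rests on a trick you did not find: given $h,k\in A_{sa}$, set $a=h+ik$ and observe that $a+ia^*=(1+i)(h+k)$, so by $1$-homogeneity $\Delta(a+ia^*)=(1+i)\Delta(h+k)$, while Lemma~\ref{l w2l symmetric maps are symmetric} gives $\Delta(a)+i\Delta(a)^*=(1+i)(\Delta(h)+\Delta(k))$. Now fix $\phi\in B^*_{sa}$ and apply the weak-2-local property at the two pairs $(a+ia^*,a)$ and $(a+ia^*,a^*)$. Playing the resulting identities off against one another (using that $\phi T(x^*)=\overline{\phi T(x)}$ for symmetric $T$, and that $\phi\Delta(a)+\phi\Delta(a^*)=\phi\Delta(a+a^*)\in\mathbb{R}$ by Lemma~\ref{l w2l symmetric maps are symmetric}$(c)$) yields
\[
\Im\hbox{m}\,\phi\big(\Delta(a+ia^*)-\Delta(a)-\Delta(ia^*)\big)=0
\]
for every $\phi\in B^*_{sa}$, hence $\Delta(a+ia^*)-\Delta(a)-i\Delta(a)^*\in B_{sa}$. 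But this element equals $(1+i)\big(\Delta(h+k)-\Delta(h)-\Delta(k)\big)$, which is $(1+i)$ times a self-adjoint element; the only self-adjoint $x$ with $(1+i)x\in B_{sa}$ is $x=0$. No commutative reduction, no GNS, no appeal to \cite{NiPe2015}: the entire argument is a short computation exploiting the interplay between the involution, the factor $(1+i)$, and the two auxiliary symmetric maps at the pairs $(a+ia^*,a)$ and $(a+ia^*,a^*)$.
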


\begin{proof}
Let $\Delta:A \rightarrow B$ be a weak-2-local symmetric map between $C^*$-algebras. Applying Lemma \ref{l w2l symmetric maps are symmetric}$(b)$ we deduce that, in order to prove that $\Delta$ is linear, it is enough to show that $\Delta(h+k)=\Delta(h)+\Delta(k),$ for every $h,k \in A_{sa}.$\smallskip

Let us fix $h, k \in A_{sa}$, and write $a=h+ik$. We observe that $a+ia^*=(1+i)(h+k) $, and so
\begin{equation}\label{41} \Delta(a+ia^*)=(1+i)\Delta(h+k),\end{equation} where $\Delta(h+k) \in B_{sa}$
(compare Lemmas \ref{l basic properties 1} and \ref{l w2l symmetric maps are symmetric}).\smallskip

We deduce from Lemma \ref{l w2l symmetric maps are symmetric}$(a)-(b)$ that \begin{equation}\label{42} \Delta(a)+i \Delta(a)^*= (1+i) (\Delta(h)+\Delta(k)).\end{equation}

Let us fix an arbitrary functional $\phi \in B^*_{sa}$. By hypothesis there are symmetric linear maps $T_{a+ i a^*, a, \phi}, T_{a+ i a^*, a^*, \phi} : A\to B$ satisfying: $$\phi \Delta (a+ia^*)= \phi T_{a+ i a^*, a, \phi} (a+ia^*)=\phi T_{a+ i a^*, a^*, \phi} (a+ia^*)$$ $$\phi \Delta (a)=
\phi T_{a+ i a^*, a, \phi} (a)  \hbox{ and } \phi \Delta (a^*) = \phi T_{a+ i a^*, a^*, \phi}(a^*)= \phi T_{a+ i a^*, a^*, \phi}(a)^*.$$

In particular, $$\phi (T_{a+ i a^*, a^*, \phi}(a)^* + T_{a+ i a^*, a, \phi}(a) )= \phi \Delta (a^*) + \phi
\Delta (a) $$ $$=\hbox{(by Lemma \ref{l w2l symmetric maps are symmetric}$(a)-(c)$)}= \phi \Delta (a^*+a) \in \mathbb{ R},$$
and since $\phi (T_{a+ i a^*, a^*, \phi} (a) + T_{a+ i a^*, a^*, \phi} (a)^*) \in \mathbb{ R}$, we deduce that $$\phi (T_{a+ i a^*, a^*, \phi}-T_{a+ i a^*, a, \phi})(a) \in \mathbb{R},$$ and so \begin{equation} \label{50} \phi (T_{a+ i a^*, a^*, \phi}-\Delta)(a) \in \mathbb{R}.\end{equation}

On the other hand,
\begin{equation}\label{eq 1 0608} \phi \Delta(a+ia^*)= \phi T_{a+ i a^*, a, \phi} (a+ia^*)=\phi T_{a+ i a^*, a, \phi} (a)+i \phi T_{a+ i a^*, a, \phi}(a^*)
\end{equation} $$= \phi \Delta (a) + i \phi T_{a+ i a^*, a, \phi}(a^*)$$
and
\begin{equation}\label{eq 2 0608} \phi \Delta (a+ia^*)= \phi T_{a+ i a^*, a^*, \phi}(a+ia^*)=\phi T_{a+ i a^*, a^*, \phi}(a)+i \phi T_{a+ i a^*, a^*, \phi}(a^*)
\end{equation} $$= \phi T_{a+ i a^*, a^*, \phi}(a) + i \phi \Delta (a^*).$$

Therefore, \begin{equation} \label{51} \phi \Delta (a)+i \phi T_{a+ i a^*, a, \phi} (a^*)= \phi T_{a+ i a^*, a^*, \phi}(a) + i \phi \Delta(a^*).\end{equation}\smallskip

Combining  \eqref{50} and \eqref{51} we get
$$\phi (\Delta (ia^*)-T_{a+ i a^*, a, \phi}(ia^*)) 
= \phi \Delta (a)-\phi T_{a+ i a^*, a^*, \phi}(a) \in \mathbb{ R},$$ which implies that \begin{equation}\label{eq 3 0608} \Im\hbox{m} \phi \Delta(ia^*)= \Im\hbox{m} \phi T_{a+ i a^*, a, \phi}(ia^*).
\end{equation}

Clearly, by \eqref{eq 1 0608} and \eqref{eq 2 0608}, the identity $$ 2 \phi \Delta (a+ia^*)=  \phi \Delta (a) + \phi T_{a+ i a^*, a^*, \phi} (a)  +   \phi T_{a+ i a^*, a, \phi}  (i a^*)+ \phi \Delta (i a^*),$$ holds. Since, from \eqref{50}, $\Im\hbox{m} \phi \Delta(a)=\Im\hbox{m} \phi T_{a+ i a^*, a^*, \phi}(a)$,
it follows from the last identity and \eqref{eq 3 0608} that $$2\Im\hbox{m} \phi \Delta (a+ia^*)= 2 \Im\hbox{m} \phi \Delta(a) +  2 \Im\hbox{m} \phi \Delta (i a^*),$$ or equivalently, $$\Im\hbox{m} \phi \left(\Delta (a+ia^*)- \Delta(a) - \Delta (i a^*)\right)=0.$$ The arbitrariness of $\phi\in B_{sa}^*$, implies that $\Delta (a+ia^*)- \Delta(a) -\Delta (i a^*) \in A_{sa}.$ Thus, by \eqref{41} and \eqref{42} $$(1+i)[\Delta(h+k)-\Delta(h)-\Delta(k)]=\Delta(a+ia^*)-(\Delta(a)+i\Delta(a)^*) \in A_{sa}.$$ Finally, since Lemma \ref{l w2l symmetric maps are symmetric}$(a)$ assures that  $\Delta(h+k),\Delta(h), \Delta(k) \in B_{sa}$, we deduce that $$0=\Delta(h+k)-\Delta(h)-\Delta(k),$$ which completes the proof.
\end{proof}

\begin{corollary}\label{c weak-2-local starderivations and starhom are linear} Every weak-2-local $^*$-derivation on a C$^*$-algebra and every weak-2-local $^*$-homomorphism between C$^*$-algebras is linear.$\hfill\Box$
\end{corollary}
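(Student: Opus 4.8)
The plan is to read the corollary off Theorem \ref{thm weak-2-local symmetric maps are linear} by recognising that $^*$-derivations and $^*$-homomorphisms are particular instances of symmetric linear maps. First I would recall the relevant definitions: a $^*$-derivation $D$ on a C$^*$-algebra $A$ is a (bounded, linear) derivation satisfying $D(a^*)=D(a)^*$ for all $a\in A$, so $D\in\mathcal{S}(A,A)$; and every $^*$-homomorphism $\pi:A\to B$ satisfies $\pi(a^*)=\pi(a)^*$, whence $\pi\in\mathcal{S}(A,B)$. This inclusion is precisely the observation already recorded in Remark \ref{r symmetric maps are a subspace}, namely that $\mathcal{S}(A,B)$ contains all $^*$-homomorphisms and all $^*$-derivations.

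Next, let $\Delta:A\to B$ be a weak-2-local $^*$-homomorphism (respectively, let $\Delta:A\to A$ be a weak-2-local $^*$-derivation). By definition, for every $a,b\in A$ and every $\phi\in B^{*}$ there exists a $^*$-homomorphism (resp. a $^*$-derivation) $T_{a,b,\phi}$ with $\phi\Delta(a)=\phi T_{a,b,\phi}(a)$ and $\phi\Delta(b)=\phi T_{a,b,\phi}(b)$. Since each $T_{a,b,\phi}$ belongs to $\mathcal{S}(A,B)$, the map $\Delta$ is in particular a weak-2-local $\mathcal{S}(A,B)$-map, that is, a weak-2-local symmetric map in the sense studied in this section. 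Theorem \ref{thm weak-2-local symmetric maps are linear} then applies verbatim and gives that $\Delta$ is linear, which is exactly the assertion of the corollary.

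I do not expect any genuine obstacle here: all of the work has been concentrated in Theorem \ref{thm weak-2-local symmetric maps are linear}, and the present statement is a routine specialisation. The only point deserving (minimal) attention is the elementary membership of $^*$-derivations and $^*$-homomorphisms in the space of symmetric linear maps, which is what allows the local witnesses $T_{a,b,\phi}$ to be viewed as elements of $\mathcal{S}(A,B)$; this is immediate. Consequently the corollary follows with no further computation, and the $\hfill\Box$ placed after the statement faithfully reflects that the proof reduces instantly to the main theorem. (The companion questions of whether such a $\Delta$ is moreover a $^*$-derivation or a $^*$-homomorphism are a separate matter, addressed in the later corollaries; here only linearity is claimed.)
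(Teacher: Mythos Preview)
Your proposal is correct and follows exactly the paper's intended argument: since every $^*$-derivation and every $^*$-homomorphism is a symmetric linear map (Remark \ref{r symmetric maps are a subspace}), any weak-2-local $^*$-derivation or $^*$-homomorphism is a weak-2-local symmetric map, and Theorem \ref{thm weak-2-local symmetric maps are linear} immediately gives linearity. This is precisely why the paper marks the corollary with a bare $\Box$.
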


We recall that 2-local $^*$-derivations and 2-local $^*$-homomorphisms are weak-2-local $^*$-derivations and weak-2-local $^*$-homomorphisms, respectively.

\begin{corollary}\label{c 2-local starhoms are linear} Every 2-local $^*$-homomorphism between C$^*$-algebras is linear.$\hfill\Box$
\end{corollary}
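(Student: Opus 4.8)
The plan is to deduce this immediately from Theorem~\ref{thm weak-2-local symmetric maps are linear} (equivalently, from Corollary~\ref{c weak-2-local starderivations and starhom are linear}) by placing the $2$-local hypothesis inside the weak-$2$-local framework. The first step is the elementary observation that a $2$-local $\mathcal{S}$ map is always a weak-$2$-local $\mathcal{S}$ map: if $\Delta:A\to B$ is $2$-local and $x,y\in A$, the operator $T_{x,y}\in\mathcal{S}$ provided by the hypothesis satisfies $\Delta(x)=T_{x,y}(x)$ and $\Delta(y)=T_{x,y}(y)$, so for \emph{any} $\phi\in B^{*}$ the choice $T_{x,y,\phi}:=T_{x,y}$ trivially witnesses $\phi\Delta(x)=\phi T_{x,y,\phi}(x)$ and $\phi\Delta(y)=\phi T_{x,y,\phi}(y)$. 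In particular every $2$-local $^*$-homomorphism between C$^*$-algebras is a weak-$2$-local $^*$-homomorphism, as already flagged in the remark preceding the statement.

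The second step is to recall that each $^*$-homomorphism $\pi:A\to B$ is linear and satisfies $\pi(a^{*})=\pi(a)^{*}$, hence belongs to $\mathcal{S}(A,B)$; thus a weak-$2$-local $^*$-homomorphism is in particular a weak-$2$-local $\mathcal{S}(A,B)$ map, i.e. a weak-$2$-local symmetric map. Theorem~\ref{thm weak-2-local symmetric maps are linear} then applies directly and yields that $\Delta$ is linear, which is exactly the assertion. This is precisely the content of the $^*$-homomorphism half of Corollary~\ref{c weak-2-local starderivations and starhom are linear}, now specialised to $2$-local maps.

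There is no real obstacle here; the statement is a formal corollary, which is why it carries a $\hfill\Box$ in place of a separate argument. The only point requiring a moment's care is the harmless verification in the first step that a witness independent of $\phi$ does the job for every $\phi$. I emphasise that only \emph{linearity} is being claimed at this stage; upgrading the conclusion to ``$\Delta$ is a $^*$-homomorphism'' would require an additional multiplicativity argument (a $2$-local Kowalski--S{\l}odkowski step), which is the subject of a later corollary and not of the statement under consideration.
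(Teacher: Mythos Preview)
Your argument is correct and matches the paper's approach exactly: the corollary is stated with a bare $\hfill\Box$ precisely because it follows immediately from the preceding remark (2-local $\Rightarrow$ weak-2-local) together with Corollary~\ref{c weak-2-local starderivations and starhom are linear}. Your added unpacking of why a $\phi$-independent witness suffices and why $^*$-homomorphisms lie in $\mathcal{S}(A,B)$ is accurate and simply makes explicit what the paper leaves implicit.
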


The main result in \cite{BuFerGarPe2015RACSAM} proves that every 2-local $^*$-homomorphism from a von Neumann algebra into a C$^*$-algebra is linear and a $^*$-homomorphism. The main difficulty in the proof of this result is to show that every such a mapping is linear. It is also remarked at the introduction of \cite{BuFerGarPe2015RACSAM}, that ``it would be of great interest to explore if the same conclusion remains true for general C$^*$-algebras''. Surprisingly, we can give now a very simple proof of the general result as a consequence of our previous corollary  and the results in \cite{Pe2015}. More concretely, Theorem 3.9 in \cite{Pe2015} combined with the above Corollary \ref{c 2-local starhoms are linear} establish that every  2-local $^*$-homomorphism on a C$^*$-algebra is a $^*$-homomorphism. We can give now a positive answer to Problem \ref{problem Kowalski-Slodkowski C*algebras}.

\begin{corollary}\label{c 2-local starhoms are starhoms} Every 2-local $^*$-homomorphism between C$^*$-algebras is a linear $^*$-homomorphism.$\hfill\Box$
\end{corollary}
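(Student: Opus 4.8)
The plan is to obtain this corollary as an immediate synthesis of two facts: the linearity statement already recorded in Corollary \ref{c 2-local starhoms are linear}, and the description of \emph{linear} 2-local $^*$-homomorphisms provided by \cite{Pe2015}. So let $\Delta: A\to B$ be a 2-local $^*$-homomorphism between C$^*$-algebras. First I would note that every $^*$-homomorphism $\pi$ between C$^*$-algebras is a symmetric linear map (it satisfies $\pi(a^*)=\pi(a)^*$), so the set of all $^*$-homomorphisms from $A$ into $B$ is contained in $\mathcal{S}(A,B)$; hence $\Delta$ is in particular a 2-local $\mathcal{S}(A,B)$-map, and a fortiori a weak-2-local symmetric map. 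Therefore Theorem \ref{thm weak-2-local symmetric maps are linear}---equivalently, the already-stated Corollary \ref{c 2-local starhoms are linear}---shows that $\Delta$ is linear.

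Second, I would invoke Theorem 3.9 in \cite{Pe2015}, which asserts that every linear 2-local $^*$-homomorphism between C$^*$-algebras is a $^*$-homomorphism. Since $\Delta$ is linear by the previous step and is a 2-local $^*$-homomorphism by hypothesis, that theorem applies directly and yields that $\Delta$ is a $^*$-homomorphism. Combining the two steps, $\Delta$ is a linear $^*$-homomorphism, which is the desired conclusion; this in particular answers Problem \ref{problem Kowalski-Slodkowski C*algebras} in the affirmative and settles the question raised in the introduction of \cite{BuFerGarPe2015RACSAM}.

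I do not expect a genuine obstacle here, since the corollary merely packages together the main theorem of this note and an existing multiplicativity result. The only point needing a word of justification is the reduction in the first step---that a 2-local $^*$-homomorphism is a weak-2-local symmetric map---which is immediate once one observes that $^*$-homomorphisms are symmetric. If one insisted on a proof not relying on \cite{Pe2015}, the substantive difficulty would be re-deriving the multiplicativity of $\Delta$ from its pointwise agreement with $^*$-homomorphisms; that argument genuinely uses C$^*$-structure (for instance, passing to abelian subalgebras to apply the Kowalski-S{\l}odkowski theorem, together with control of $\Delta$ on projections and unitaries), and it is precisely this work that \cite{Pe2015} already performs.
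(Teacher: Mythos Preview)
Your proposal is correct and mirrors the paper's own argument exactly: the paragraph preceding the corollary explains that the result follows by combining Corollary~\ref{c 2-local starhoms are linear} (linearity) with Theorem~3.9 in \cite{Pe2015} (multiplicativity of linear 2-local $^*$-homomorphisms), and the corollary is then stated without further proof.
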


As we have already commented, the previous corollary provides a generalized Kowalski-S{\l}odkowski theorem for C$^*$-algebras in the sense of \cite{BuFerGarPe2015RACSAM}. It is also, in some sense, an extension of \cite[Theorem 4.1]{EssaPeRa14}. We do not know if every (linear) weak-2-local $^*$-homomorphism on a C$^*$-algebra is a $^*$-homomorphism. This question remains as an open problem.\smallskip

We deal now with 2-local $^*$-derivations.

\begin{corollary}\label{c 2-local starderivations are linear} Every 2-local $^*$-derivation on a C$^*$-algebra is a linear $^*$-derivation.
\end{corollary}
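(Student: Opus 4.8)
The plan is to deduce this from the main theorem of the paper together with B.E. Johnson's classical result that every local derivation from a C$^*$-algebra into a Banach bimodule is a derivation (cf. \cite{John01}). Let $\Delta : A \to A$ be a 2-local $^*$-derivation on a C$^*$-algebra $A$. Since every $^*$-derivation on $A$ is a linear symmetric map, for each $a,b\in A$ the map $T_{a,b}$ witnessing the 2-local condition lies in $\mathcal{S}(A)$; in particular $\Delta$ is a weak-2-local symmetric map. First I would therefore apply Theorem \ref{thm weak-2-local symmetric maps are linear} (equivalently, Corollary \ref{c weak-2-local starderivations and starhom are linear}) to conclude that $\Delta$ is linear, and Lemma \ref{l w2l symmetric maps are symmetric}$(a)$ to conclude that $\Delta(a^*) = \Delta(a)^*$ for every $a\in A$, i.e. $\Delta$ is symmetric.

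Next I would specialise the 2-local hypothesis to the case $b=a$: for each $a\in A$ there exists a $^*$-derivation $D_{a} : A \to A$ --- and a fortiori a derivation --- with $\Delta(a) = D_a(a)$. Thus the linear mapping $\Delta$ is a local derivation on $A$ in the sense recalled in the Introduction, and Johnson's theorem \cite{John01} forces $\Delta$ to be a (bounded, linear) derivation. Combining this with the symmetry obtained in the previous step, $\Delta$ is a $^*$-derivation, which is exactly the assertion of the corollary.

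The argument is a short two-step reduction and presents no real obstacle once Theorem \ref{thm weak-2-local symmetric maps are linear} is available: linearity is the only delicate point and it has already been settled. The one external ingredient is Johnson's theorem; were one to avoid it, the remaining task would be to show directly that a linear local $^*$-derivation on a C$^*$-algebra is a derivation, but this is precisely the content of \cite{John01} and nothing is gained by reproving it here.
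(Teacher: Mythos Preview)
Your argument is correct, but the route differs from the paper's. After invoking Theorem~\ref{thm weak-2-local symmetric maps are linear} to obtain linearity (this step is identical), the paper exploits the 2-local hypothesis at the pair $(a,a^2)$: picking a $^*$-derivation $D_{a,a^2}$ with $\Delta(a)=D_{a,a^2}(a)$ and $\Delta(a^2)=D_{a,a^2}(a^2)$ yields $\Delta(a^2)=\Delta(a)a+a\Delta(a)$, so $\Delta$ is a Jordan derivation, and then Johnson's 1996 theorem \cite{John96} (Jordan derivations on C$^*$-algebras are derivations) finishes the job. You instead specialise to the degenerate pair $(a,a)$, obtaining only that $\Delta$ is a \emph{local} derivation, and then invoke Johnson's 2001 theorem \cite{John01}. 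Both arguments are valid two-line reductions to an external Johnson result; yours uses less of the 2-local structure but leans on the deeper of the two Johnson theorems. It is worth noting that your approach is essentially the one the paper adopts in the very next corollary (Corollary~\ref{c w-2-local starderivations are linear starderivations}) for the weak-2-local case, where the $(a,a^2)$ trick is no longer available.
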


\begin{proof} Let $\Delta : A \to A$ be a 2-local $^*$-derivation on a C$^*$-algebra. Theorem \ref{thm weak-2-local symmetric maps are linear} shows that $\Delta$ is linear. Further, given $a\in A$, there exists a $^*$-derivation $D_{a,a^2}: A\to A$ such that $$\Delta (a) = D_{a,a^2} (a) \hbox{ and } \Delta (a^2) = D_{a,a^2} (a^2).$$ Therefore $$\Delta (a^2) = D_{a,a^2} (a^2) =  D_{a,a^2} (a) a + a D_{a,a^2} (a) = \Delta (a) a + a \Delta (a),$$ for every $a\in A$, which shows that $\Delta$ is a Jordan derivation on $A$. Finally, it is known, by Johnson's theorem (compare \cite{John96}, see also \cite[Corolaries 17 and 18]{PeRu}), that Jordan derivations on a C$^*$-algebra are derivations.
\end{proof}

We shall see next that the conclusion of the above Corollary \ref{c 2-local starderivations are linear} can be improved.

\begin{corollary}\label{c w-2-local starderivations are linear starderivations} Every weak-2-local $^*$-derivation on a C$^*$-algebra is a linear $^*$-derivation.
\end{corollary}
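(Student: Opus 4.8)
The plan is to deduce the statement from Theorem~\ref{thm weak-2-local symmetric maps are linear} together with the known characterisation of weak-local derivations on C$^*$-algebras. Let $\Delta : A\to A$ be a weak-2-local $^*$-derivation. Since every $^*$-derivation is a symmetric linear map, $\Delta$ is in particular a weak-2-local symmetric map, so Theorem~\ref{thm weak-2-local symmetric maps are linear} shows that $\Delta$ is linear, and Lemma~\ref{l w2l symmetric maps are symmetric}$(a)$ shows that $\Delta$ is symmetric, that is, $\Delta(a^*)=\Delta(a)^*$ for all $a\in A$.

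It then remains to prove that $\Delta$ is a derivation. Here I would use that, being linear, $\Delta$ is automatically a weak-local $^*$-derivation: given $x\in A$ and $\phi\in A^*$, apply the weak-2-local hypothesis to the pair $(x,x)$ to obtain a $^*$-derivation $D_{x,x,\phi}:A\to A$ with $\phi\Delta(x)=\phi D_{x,x,\phi}(x)$. In particular $\Delta$ is a (linear) weak-local derivation on $A$, so the main result of \cite{EssaPeRa14} quoted in the introduction --- every weak-local derivation on a C$^*$-algebra is a derivation --- applies and gives that $\Delta$ is a derivation. A derivation which is also symmetric is by definition a $^*$-derivation, and the proof is complete.

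I do not expect any real obstacle beyond identifying the right tool: the only non-formal input is the linearity provided by Theorem~\ref{thm weak-2-local symmetric maps are linear}, and once $\Delta$ is linear the reduction to a weak-local derivation is immediate (take $y=x$). A more self-contained alternative would imitate the Jordan-identity argument of Corollary~\ref{c 2-local starderivations are linear}, trying to establish $\Delta(a^2)=\Delta(a)a+a\Delta(a)$ for $a\in A_{sa}$ by testing against $\phi\in A^*_{sa}$; the snag there is that for a $^*$-derivation $D$ with $\phi\Delta(a)=\phi D(a)$ one only controls $\phi(D(a)-\Delta(a))$, not $\phi\big((D(a)-\Delta(a))a+a(D(a)-\Delta(a))\big)$, so one would need to re-run the weak-2-local property with the auxiliary symmetric functional $y\mapsto\tfrac12\phi(ay+ya)$ and iterate, which is more delicate than the route through \cite{EssaPeRa14}.
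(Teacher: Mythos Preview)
Your proof is correct and follows essentially the same route as the paper: use Theorem~\ref{thm weak-2-local symmetric maps are linear} to obtain linearity, observe that a weak-2-local $^*$-derivation is in particular a weak-local derivation, and then invoke \cite[Theorem~3.4]{EssaPeRa14}. Your version is in fact slightly more complete, since you make explicit the symmetry of $\Delta$ via Lemma~\ref{l w2l symmetric maps are symmetric}$(a)$, which is needed to conclude that the resulting derivation is a $^*$-derivation.
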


\begin{proof} Let $\Delta : A\to A$ be a weak-2-local $^*$-derivation on a C$^*$-algebra. Theorem \ref{thm weak-2-local symmetric maps are linear} implies that $\Delta$ is a linear mapping. Since every weak-2-local $^*$-derivation on a C$^*$-algebra is a weak-local $^*$-derivation, Theorem 3.4 in \cite{EssaPeRa14} assures that $\Delta$ is a derivation.
\end{proof}

Sh. Ayupov and K. Kudaybergenov have recently studied 2-local derivations on von Neumann algebras in \cite{AyuKuday2014}. The main conclusion established in the just quoted paper shows that every 2-local derivation on an arbitrary von Neumann algebra is a derivation. Our previous Corollary extends the study of Ayupov and Kudaybergenov to the class of weak-2-local $^*$-derivations on general C$^*$-algebras.

\end{document}